\documentclass[a4paper,11pt]{amsart}

\usepackage[top=30truemm,bottom=25truemm,left=35truemm,right=35truemm]{geometry}

\usepackage{amsmath}
\usepackage{amssymb}
\usepackage{amsthm}
\usepackage{graphics}
\usepackage[abbrev,alphabetic]{amsrefs}
\usepackage{amscd}
\usepackage[dvipdfmx]{graphicx}
\usepackage{color}
\usepackage{ulem}
\usepackage{enumitem}

\usepackage[all,cmtip]{xy}
\usepackage{xypic}

\title[A non-klt counterexample to Shokurov's index conjecture]
{A non-klt counterexample to Shokurov's index conjecture}

\author{Yusuke Nakamura}
\address{Graduate School of Mathematics, Nagoya University, Furo-cho, Chikusa-ku, Nagoya, 464-8602, Japan.}
\email{y.nakamura@math.nagoya-u.ac.jp}
\urladdr{https://sites.google.com/site/ynakamuraagmath/}

\author{Kohsuke Shibata}
\address{School of Engineering, Tokyo Denki University, Adachi-ku, Tokyo 120-8551, Japan.}
\email{shibata.kohsuke@mail.dendai.ac.jp}

\subjclass[2020]{Primary 14E30; Secondary 14B05}

\keywords{minimal log discrepancy, Shokurov's index conjecture, Cartier index}

\newtheorem{thm}{Theorem}[section]

\newtheorem{prop}[thm]{Proposition}

\newtheorem{claim}[thm]{Claim}

\theoremstyle{definition}
\newtheorem{defi}[thm]{Definition}
\newtheorem{eg}[thm]{Example}
\newtheorem{conj}[thm]{Conjecture}

\theoremstyle{remark}
\newtheorem{rmk}[thm]{Remark}

\newtheorem*{ackn}{Acknowledgements}

\begin{document}

\maketitle

\begin{abstract}
We give a counterexample to Shokurov's index conjecture for minimal log discrepancies in the non-klt setting.
\end{abstract}

\section{Introduction}

Throughout this paper, we work over an algebraically closed field $k$ of characteristic zero. 

Minimal log discrepancies are fundamental invariants of singularities in birational geometry. 
They measure the complexity of singularities and play an important role in the minimal model program, particularly in the context of the termination of flips (cf.\ \cite{Sho04}). 
The following conjecture, proposed by Shokurov, predicts that the minimal log discrepancy bounds the Cartier index of the canonical divisor. 

\begin{conj}[Shokurov, cf.\ \cite{Kaw15}*{Question 5.2}]\label{conj:index}
For any $n \in \mathbb{Z}_{> 0}$ and $a \in \mathbb{R}_{\ge 0}$, there exists a positive integer $r(n,a)$ satisfying the following condition: 
\begin{itemize}
\item If an $n$-dimensional $\mathbb{Q}$-Gorenstein variety $X$ and a closed point $p \in X$ satisfy 
$\operatorname{mld}_p(X) = a$, 
then the Cartier index of $K_X$ at $p$ is at most $r(n,a)$. 
\end{itemize}
\end{conj}
\noindent
Here, the \textit{Cartier index} of $K_X$ at $p$ is defined as the smallest positive integer $r$ such that $rK_X$ is Cartier in an open neighborhood of $p$. 

Conjecture~\ref{conj:index} has been verified in several cases. 
Historically, the specific case of $(n,a)=(2,0)$ follows from the classification of surface singularities (see \cite{Sho93}*{Corollary 5.10}). 
For general surfaces, the conjecture was proved by Chen and Han~\cite{CH21}, who established the result in the setting of pairs. 
In dimension three, the case $a=0$ was established by Ishii~\cite{Ish00} and Fujino~\cite{Fuj01}. 
Furthermore, Fujino proved the case $a=0$ in arbitrary dimensions, subject to a boundedness conjecture for birational automorphism groups \cite{Fuj01}*{Theorem 0.2}. 
For three-dimensional terminal singularities, the conjecture follows from Kawamata's classification~\cite{Kaw92}, while for three-dimensional canonical singularities, it was solved by Kawakita~\cite{Kaw15}. This was generalized to the setting of three-dimensional terminal pairs by Han, Liu, and Luo~\cite{HLL25}*{Theorem 1.5}. 
Finally, the conjecture has been confirmed for specific classes of singularities in arbitrary dimensions: for toric singularities by Ambro~\cite{Amb09} (cf.\ \cite{Amb16}*{Section 5.4}), and for quotient singularities by the authors~\cite{NS25}, following a partial result by Moraga~\cite{Mor23}.

In contrast to these positive results, we show that Shokurov's index conjecture does not hold in general. 
More precisely, we construct three-dimensional normal $\mathbb{Q}$-Gorenstein singularities with minimal log discrepancy exactly $1$ and arbitrarily large Cartier index.

\begin{thm}[$=$ Theorem \ref{thm:counterexample}]\label{thm:main}
For every integer $m \ge 1$, there exist a three-dimensional normal $\mathbb{Q}$-Gorenstein variety $X_m$ and a closed point $0 \in X_m$ such that $\operatorname{mld}_0(X_m)=1$ and the Cartier index of $K_{X_m}$ at $0$ is exactly $m$.
In particular, Conjecture~\ref{conj:index} fails for $(n,a) = (3,1)$.
\end{thm}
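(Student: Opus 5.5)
The plan is to build $X_m$ as a quasi-étale cyclic quotient of a product $Y \times \mathbb{A}^1$, where $Y$ is a Gorenstein log canonical but not klt surface singularity. The product already has minimal log discrepancy $1$ at the origin and Cartier index $1$; the quotient twists the $\mathbb{A}^1$-direction so that the Cartier index of $K_X$ becomes $m$. The key steps are to compute the index and to show that $\operatorname{mld}_0$ stays equal to $1$. I expect the second of these to be the real difficulty, and the concrete choices below are the ones I would try first.

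\textbf{Construction.} Let $E$ be an elliptic curve and $Y = C(E,L)$ the affine cone over $E$ with respect to an ample line bundle $L$. This is a simple elliptic singularity: it is Gorenstein, lc and not klt. The exceptional curve $E$ of its blow-up has log discrepancy $0$, and $\operatorname{mld}$ at the vertex is $0$. Let $\tau \in E$ be a point of exact order $m$, and choose $L$ invariant under translation by $\tau$ (for instance $L = \mathcal{O}_E(m\cdot o)$), so that translation lifts to an automorphism $g$ of $Y$ of order $m$. Let $\mu_m$ act on $Y \times \mathbb{A}^1_z$ by $(g, z \mapsto \zeta z)$ and set $X_m = (Y\times\mathbb{A}^1)/\mu_m$, with $0$ the image of $(\text{vertex},0)$.
\begin{itemize}
\item Translation by $\tau$ is free on $E$, so the action is free outside $\{\text{vertex}\}\times\{0\}$.
\item Hence $\pi\colon Y\times\mathbb{A}^1 \to X_m$ is étale in codimension one, and $K_{Y\times\mathbb{A}^1} = \pi^*K_{X_m}$.
\item It follows that $X_m$ is normal and $\mathbb{Q}$-Gorenstein.
\end{itemize}

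\textbf{Index.} Let $\sigma$ be a generator of $\omega_Y$; it comes from the invariant differential on $E$ and the fibre coordinate. The generator $\sigma\wedge dz$ of $\omega_{Y\times\mathbb{A}^1}$ transforms by a character $\chi$ of $\mu_m$. The $r$-th power $(\sigma\wedge dz)^{\otimes r}$ descends to a generator of $\mathcal{O}(rK_{X_m})$ near $0$ exactly when $\chi^r = 1$, because $\pi$ is étale in codimension one and the local class group is computed from characters. I will adjust $g$ by a scalar on the cone coordinate, and if necessary use the weight on $z$, so that $\chi$ has exact order $m$. This gives Cartier index exactly $m$.

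\textbf{Minimal log discrepancy.} The upper bound $\operatorname{mld}_0(X_m)\le 1$ should come from an explicit divisor, namely the image of the blow-up of the product along $(\text{vertex},0)$, or a $\mu_m$-invariant weighted variant of it. For the lower bound I will use the formula for quasi-étale covers: if a divisor $F$ over the product maps to $G$ over $X_m$ with ramification index $e$, then $A_{Y\times\mathbb{A}^1}(F) = e\,A_{X_m}(G)$.

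I will reduce to $\mu_m$-invariant monomial valuations in the cone degree $t$ and in $z$. On the product, such a valuation with $v(t)=\alpha$ and $v(z)=\beta$ has log discrepancy $\alpha\cdot 0 + \beta = \beta$. The task is then to determine precisely which pairs $(\alpha,\beta)$ arise after normalizing to the invariant ring, and to show the resulting minimum is $1$.

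This is the step I expect to be the main obstacle. Naively, refining the lattice by $\tfrac1m(\ast,1)$ could produce log discrepancy $1/m$. To rule this out I will exploit that $g$ acts on degree-$k$ sections of $L$ through the Heisenberg-type (theta group) action, not through a pure scalar. Consequently no invariant monomial has small $z$-weight while the $t$-weight is absorbed. Equivalently, every divisor over $0$ must see the fibration $X_m \to \mathbb{A}^1_{z^m}$, whose fibre over $0$ is $Y/\mu_m$ with multiplicity one in the correct normalization.

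If this direct valuation count is unmanageable, my fallback is a different argument. I would construct a dlt modification of $X_m$ explicitly as the quotient of $\operatorname{Bl}_{\text{vertex}}Y\times\mathbb{A}^1$. This is an $E$-bundle over $\mathbb{A}^1$ twisted by $\mu_m$, with boundary divisor of log discrepancy $0$. I would then bound log discrepancies of divisors over $0$ on it by inversion of adjunction along that boundary, which is a smooth elliptic fibration over $\mathbb{A}^1$. Its own minimal log discrepancy at points over $0$ is $1$.
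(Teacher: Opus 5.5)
Your construction is the paper's. Your index argument is also correct, but it takes a different route. You use the fact that characters of $G$ inject into the local class group of a quotient that is \'etale in codimension one at a point fixed by all of $G$. The paper instead passes to $W_m$ and shows that $K_{F'}$ has order exactly $m$ in $\operatorname{Pic}(F')$. Yours is shorter and purely local; the paper's reuses the model $W_m$. No ``adjustment of $g$'' is needed there, and none would help. The generator $\omega_E\wedge ds/s$ of $\omega_Y$, with $s$ a fibre coordinate on $\widetilde Y$, is invariant under the lifted translation and under every fibre scaling. So $\chi$ is just the character of $dz$, which has exact order $m$.

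The genuine gap is the lower bound $\operatorname{mld}_0(X_m)\ge 1$ in your main line. You do not justify reducing to invariant valuations that are monomial in $t$ and $z$. $Y$ is not toric, and divisors centred at points of $E\times\{0\}\subset\widetilde Y\times\mathbb{A}^1$ involve a coordinate along $E$. Your heuristic is also wrong. Since $\pi$ is unramified along $Y\times\{0\}$, the fibre of $X_m\to\mathbb{A}^1_{z^m}$ over $0$ is $m$ times the image of $Y\times\{0\}$, not multiplicity one.

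The missing idea is that $G$ acts \emph{freely} on $\widetilde Y\times\mathbb{A}^1$: the fixed point $(0,0)$ is replaced by $E\times\{0\}$, on which $G$ acts by translation. Any divisor over $X_m$ lives on some model $W'\to W_m$, and $W'\times_{W_m}(\widetilde Y\times\mathbb{A}^1)\to W'$ is \'etale. Hence the ramification index $e$ in your own formula is always $1$, and $\operatorname{mld}_0(X_m)=\operatorname{mld}_{(0,0)}(Y\times\mathbb{A}^1)=1$. This is the virtually free mechanism behind Proposition~\ref{prop:eq}. Quasi-\'etaleness alone would not suffice. If $g$ acted by the fibre scaling $s\mapsto\zeta s$ instead, $G$ would fix $E\times\{0\}$ pointwise. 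The $\frac1m(1,1)$ weighted blow-up along it would then give a divisor over $0$ with log discrepancy $1/m$. What matters is freeness on the exceptional curve, not the theta-group structure of $g$ on $H^0(L^{\otimes k})$.

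Your fallback becomes a correct proof once this freeness is stated. It is more elementary than the paper's route, which uses Proposition~\ref{prop:eq}, the product formula for mld, and stability of virtual freeness under products. The argument runs as follows:
\begin{itemize}
\item $W_m$ is the total space of a line bundle over the smooth surface $F'=(E\times\mathbb{A}^1)/G$, with $F'$ as zero section.
\item Pulling back along the \'etale map $h$ gives $r^*K_{X_m}=K_{W_m}+F'$.
\item $r^{-1}(0)=F'_0$.
\end{itemize}
Hence any divisor over $0\in X_m$ has the same log discrepancy as for the log smooth pair $(W_m,F')$, and its centre lies in the curve $F'_0$. That log discrepancy is at least $\operatorname{codim}-1\ge 1$, with equality for the blow-up of $F'_0$. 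No inversion of adjunction is needed. Two corrections:
\begin{itemize}
\item Use $\widetilde Y=\underline{\operatorname{Spec}}\bigoplus_{n\ge0}L^{\otimes n}$ rather than $\operatorname{Bl}_{\text{vertex}}Y$. They differ for $m\le 2$, when $R$ is not generated in degree one.
\item $W_m$ is not an ``$E$-bundle over $\mathbb{A}^1$'' but the line bundle over $F'$ just described.
\end{itemize}
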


We briefly describe the construction (see Example \ref{eg:X_m} for more details). 
Let $Y$ be an affine cone over an elliptic curve, polarized by an ample line bundle of degree $m$. 
A torsion translation of order $m$ on the elliptic curve naturally induces an action of the cyclic group $G \simeq \mathbb{Z}/m\mathbb{Z}$ on $Y$. 
We then extend this action to the product $Y \times \mathbb{A}^1$ by letting the generator act on the coordinate $t$ of $\mathbb{A}^1$ as $t \mapsto \xi t$, where $\xi$ is a primitive $m$-th root of unity. 
We define our counterexample as the quotient variety $X_m := (Y \times \mathbb{A}^1)/G$. 
By introducing an $\mathbb{A}^1$-factor and a scaling action on it, this construction forces the Cartier index of $K_{X_m}$ to be exactly $m$.

The mechanism that preserves the minimal log discrepancy in this construction relies on the concept of \textit{virtually free} actions. 
The notion of virtually free actions was recently introduced by the authors in \cite{NS4} to construct counterexamples to the precise inversion of adjunction (PIA) conjecture. 
As explained in \cite{NS4}*{Remark 3.2}, a virtually free action is an intermediate concept between a free action and an action that is free in codimension one. 
A remarkable feature of virtually free actions is that they preserve minimal log discrepancies under quotients, exactly as free actions do (see Proposition \ref{prop:eq}). 
In our construction, the $G$-action on $Y \times \mathbb{A}^1$ is virtually free. 
This property guarantees that $\operatorname{mld}_0(X_m) = \operatorname{mld}_{(0,0)}(Y \times \mathbb{A}^1) = 1$, allowing us to control the minimal log discrepancy independently of the Cartier index.

We conclude with a remark on the singularity class. 
Since our counterexample $0 \in X_m$ is log canonical but not klt, it suggests a natural revision for Conjecture~\ref{conj:index}: one should either assume that $X$ is klt, or, in the log canonical case, restrict to $a=0$.
Indeed, it is shown in \cite{NS4}*{Theorem 3.10} (cf.\ \cite{NS5}*{Theorem 6.7}) that there exists no non-free virtually free action on klt singularities. 
Therefore, one cannot construct a counterexample in the klt setting using the same idea based on virtually free actions.

\begin{ackn} 
The first author is partially supported by Inamori Foundation and by JSPS KAKENHI No.\ 22K13888. 
The second author is partially supported by JSPS KAKENHI No.\ 23K12958.
\end{ackn}

\section{Preliminaries}

We recall the definition of the minimal log discrepancy for normal $\mathbb{Q}$-Gorenstein varieties.

\begin{defi}\label{defi:mld}
Let $X$ be a normal variety and let $p \in X$ be a closed point. 
Suppose that $K_X$ is $\mathbb{Q}$-Cartier in a neighborhood of $p$.
Let $f\colon X' \to X$ be a proper birational morphism from a normal variety $X'$, and let $E$ be a prime divisor on $X'$ with center $c_X(E) = \{p\}$. 
The \textit{log discrepancy} of $E$ with respect to $X$ is defined by
\[
a_E(X) := 1 + \operatorname{ord}_E(K_{X'} - f^* K_X).
\]

The \textit{minimal log discrepancy} of $X$ at $p$ is defined by
\[
\operatorname{mld}_p(X) := \inf_{c_X(E) = \{ p \}} a_E(X)
\]
if $\dim X \ge 2$, where the infimum is taken over all prime divisors $E$ over $X$ with center $c_X(E) = \{ p \}$. 
When $\dim X=1$, we use the standard convention that
$\operatorname{mld}_p(X)$ is this infimum if it is non-negative, and is $-\infty$ otherwise.
It is known that $\operatorname{mld}_p(X) \in \mathbb{Q}_{\ge 0} \cup \{ - \infty \}$ in general (cf.\ \cite{KM98}*{Corollary 2.31}). 
\end{defi}

Following \cite{NS4}, we define the concept of virtually free actions. 

\begin{defi}\label{defi:vf}
Suppose that a finite group $G$ acts on a normal variety $X$. 
We say that the $G$-action on $X$ is \textit{virtually free} if the following condition holds: 
for every $G$-equivariant proper birational morphism $Y \to X$ from a normal variety $Y$, the $G$-action on $Y$ is free in codimension one. 
\end{defi}

\begin{rmk}\label{rmk:vf}
By \cite{NS4}*{Proposition 3.8}, this condition is equivalent to saying that there exists a $G$-equivariant proper birational morphism $Y \to X$ from a normal variety $Y$ such that the $G$-action on $Y$ is free. 
\end{rmk}

The following proposition states an important property of virtually free actions: it asserts that the minimal log discrepancy is preserved under the quotient, precisely as if the action were free.

\begin{prop}[\cite{NS4}*{Proposition 3.5}]\label{prop:eq}
Let $X$ be a normal $\mathbb{Q}$-Gorenstein variety equipped with an action of a finite group $G$. 
Suppose that the $G$-action on $X$ is virtually free. 
Then, we have 
\[
\operatorname{mld}_{x}(X) = \operatorname{mld}_{x'}(X/G)
\]
for any closed point $x \in X$ and its image $x' \in X/G$. 
\end{prop}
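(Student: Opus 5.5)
The statement to prove is Proposition~\ref{prop:eq}, which is cited from \cite{NS4}. My plan is to compare log discrepancies upstairs and downstairs through a $G$-equivariant resolution on which $G$ acts freely (Remark~\ref{rmk:vf}), and to use the ramification formula.

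\textbf{Setup.} Let $\pi\colon X\to X/G$ be the quotient map. Since the action is virtually free, it is free in codimension one on $X$ itself; taking $Y=X$ in Definition~\ref{defi:vf}, $\pi$ is \'etale in codimension one. Hence $K_X=\pi^*K_{X/G}$, and $X/G$ is $\mathbb{Q}$-Gorenstein. More generally, let $f\colon X'\to X$ be any $G$-equivariant proper birational morphism from a normal variety. Then $G$ acts freely in codimension one on $X'$, so $X'\to X'/G$ is \'etale in codimension one, and the induced $X'/G\to X/G$ is proper birational.

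\textbf{Comparison of discrepancies.} On this diagram we have $K_{X'}=\pi'^*K_{X'/G}$. Let $E$ be a prime divisor on $X'$ and $E'$ its image on $X'/G$. The ramification index of $E$ over $E'$ is $1$. Pulling back $K_{X'/G}-g^*K_{X/G}$ gives $K_{X'}-f^*K_X$, so $a_E(X)=a_{E'}(X/G)$. Moreover $c_X(E)=\{x\}$ forces $c_{X/G}(E')=\{x'\}$. Conversely, if $c_{X/G}(E')=\{x'\}$, then the components of $\pi'^{-1}(E')$ have centers in the orbit $Gx$, and some $G$-translate has center exactly $\{x\}$, with the same log discrepancy since $g\in G$ acts by automorphisms.

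\textbf{Realizing all divisors.} It remains to check that every divisor over $X$ with center $x$, and every divisor over $X/G$ with center $x'$, appears on some such equivariant model. For a divisor $F$ over $X/G$, take a model $Z\to X/G$ on which $F$ is a divisor. Let $X'$ be the normalization of the main component of $X\times_{X/G}Z$; it carries a $G$-action and a proper birational equivariant map to $X$, and $X'/G=Z$ after normalization. For a divisor over $X$, take a model realizing it and the normalization of the join of its $G$-translates, which is $G$-equivariant.

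Taking infima over these divisors gives equality of the mlds. In dimension one both sides are smooth and the free-action case is immediate. The main obstacle is exactly the input that $G$ acts freely in codimension one on \emph{every} equivariant model, so that no ramification divisor appears. That input is supplied by Definition~\ref{defi:vf}, so the argument closes once the model-realization step is handled carefully.
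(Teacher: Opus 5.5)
The paper gives no proof of this proposition; it quotes it from \cite{NS4}. Your argument is a correct, self-contained proof along the expected lines. You realize each divisor over $X$ or over $X/G$ on a $G$-equivariant model $X'$, on which $G$ acts freely in codimension one by virtual freeness. The ramification index along every divisor is therefore $1$, so $K_{X'}=\pi'^*K_{X'/G}$ and $a_E(X)=a_{E'}(X/G)$. Your bookkeeping of centers, using that $G$ permutes transitively the divisors lying over $E'$, is also correct.

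One point needs tidying. In your comparison step you should not take an arbitrary equivariant $X'$, because $X'/G$ need not exist as a variety unless, for example, $X'$ is quasi-projective. The models you actually use are fine:
\begin{itemize}
\item For divisors over $X/G$, you show $X'/G\cong Z$, since $X'/G\to Z$ is finite and birational onto a normal variety.
\item For a divisor $E$ over $X$, you can start the join construction from a model projective over $X$. More simply, you can skip the join entirely. The restriction of $\operatorname{ord}_E$ to $k(X)^G=k(X/G)$ is a positive multiple of a divisorial valuation $\operatorname{ord}_{E'}$. Realize $E'$ on some $Z$ and build $X'$ from $Z$ as you did. Then $E$ appears on $X'$, because the divisors of $X'$ over $E'$ are exactly the extensions of $\operatorname{ord}_{E'}$ to $k(X)$, and these are all $G$-conjugate.
\end{itemize}
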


\section{Construction of the counterexamples}

In this section, we construct the counterexamples in Theorem \ref{thm:main}. 
The idea is to consider the product of an affine cone over an elliptic curve and the affine line $\mathbb{A}^1$. 
By using a torsion translation on the elliptic curve, we construct a cyclic group action on this product that is virtually free. 
Since the minimal log discrepancy is preserved under virtually free quotients, taking a translation of an arbitrarily large order allows us to make the Cartier index arbitrarily large while keeping the minimal log discrepancy unchanged.

\begin{eg}[$X_m$]\label{eg:X_m}
Let $m$ be a positive integer, and let $C$ be an elliptic curve over $k$. 
Let $L$ be a line bundle on $C$ with $\deg L = m$.  
Since $L$ is ample, we can define the affine cone $Y$ over $C$ polarized by $L$ as 
\[
R := \bigoplus _{n \ge 0} \Gamma (C, L^{\otimes n}), \qquad 
Y := \operatorname{Spec} R. 
\]
Then $Y$ is a normal Gorenstein variety (cf.\ \cite{Kol13}*{(3.8) and Proposition 3.14(4)}). 
Let $0 \in Y$ denote the vertex of the cone, which is the closed point corresponding to the ideal $\bigoplus_{n > 0} \Gamma(C, L^{\otimes n}) \subset R$. 

Let $P \in C(k)$ be a closed point of order $m$. 
Consider the translation by $P$ on $C$:
\[
t_P \colon C \longrightarrow C, \qquad Q \longmapsto Q+P.
\]
Since $\deg L = m$ and $P$ is an $m$-torsion point, we have an isomorphism $t_P^* L \cong L$. 
For an arbitrary isomorphism $\alpha_0 \colon t_P^* L \xrightarrow{\sim} L$, the composition $\alpha_0\circ t_P^*\alpha_0\circ\cdots\circ(t_P^{m-1})^*\alpha_0$ acts on $L$ as multiplication by some scalar $c \in k^\times$. 
By rescaling it as $\alpha := c^{-1/m}\alpha_0$, we obtain an isomorphism $\alpha \colon t_P^* L \xrightarrow{\sim} L$ such that $\alpha\circ t_P^*\alpha\circ\cdots\circ(t_P^{m-1})^*\alpha$ is the identity on $L$. 
By fixing this isomorphism $\alpha$, the translation $t_P$ induces an automorphism $T$ on $Y$ of order exactly $m$.

Let $G = \mathbb{Z}/m\mathbb{Z}$ be the cyclic group. 
We define the action of $G$ on $Y$ via the generator $T$. 
Let $\xi \in k$ be a primitive $m$-th root of unity. 
We define the action of $G$ on $\mathbb{A}^1 = \operatorname{Spec} k[t]$ such that the generator acts by multiplication by $\xi$ (i.e., $t \mapsto \xi t$). 
We consider the diagonal action of $G$ on the product $Y \times \mathbb{A}^1$ and define the quotient variety
\[
X_m := (Y \times \mathbb{A}^1)/G.
\]
Let $0 \in X_m$ denote the image of the point $(0, 0) \in Y \times \mathbb{A}^1$ under the quotient morphism. 
\end{eg}

\begin{thm}\label{thm:counterexample}
Let $X_m$ be the three-dimensional normal variety and $0 \in X_m$ the closed point constructed in Example \ref{eg:X_m}. 
Then, the following hold:
\begin{enumerate}
\item $\operatorname{mld}_0(X_m) = 1$. 
\item The Cartier index of $K_{X_m}$ at $0$ is exactly $m$. 
\end{enumerate}
\end{thm}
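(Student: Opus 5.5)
For (1), the plan is to show that the diagonal $G$-action on $Y\times\mathbb{A}^1$ is virtually free and then apply Proposition~\ref{prop:eq}. By Remark~\ref{rmk:vf}, it is enough to find one $G$-equivariant proper birational model on which $G$ acts freely. Take the blow-up of the vertex $\widetilde{Y}\to Y$. This is the total space of $L^{-1}$ over $C$, and it is $G$-equivariant because the action on $R$ comes from $(t_P,\alpha)$. Every point of $\widetilde{Y}$ lies over a point of $C$, and $t_P^j$ has no fixed point on $C$ for $0<j<m$. Hence $G$ acts freely on $\widetilde{Y}$, and therefore freely on $\widetilde{Y}\times\mathbb{A}^1$, which maps properly and birationally onto $Y\times\mathbb{A}^1$. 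Proposition~\ref{prop:eq} then gives $\operatorname{mld}_0(X_m)=\operatorname{mld}_{(0,0)}(Y\times\mathbb{A}^1)$.

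It remains to show $\operatorname{mld}_{(0,0)}(Y\times\mathbb{A}^1)=1$. On $\widetilde{Y}$ the exceptional divisor $E\cong C$ has $E|_E=L^{-1}$, so adjunction gives $0=K_C=(K_{\widetilde Y}+E)|_E$. Thus $K_{\widetilde Y}=f^*K_Y-E$ and $Y$ is log canonical with $a_E(Y)=0$, using $\operatorname{Pic}$ of the cone to conclude from the restriction. Now blow up $\widetilde{Y}\times\mathbb{A}^1$ along $E\times\{0\}$. This is a smooth locus, of codimension two, inside a log-smooth pair with boundary $E\times\mathbb{A}^1$ of coefficient $1$. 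The resulting divisor $F$ has center $(0,0)$ and log discrepancy $a_F=2+(-1)=1$, so the mld is at most $1$.

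For the lower bound I would use that $(Y\times\mathbb{A}^1,0)$ is crepant-equivalent near $(0,0)$ to the log smooth pair $(\widetilde Y\times\mathbb{A}^1, E\times\mathbb{A}^1)$. For a divisor whose center lies in $E\times\{0\}$, its log discrepancy is at least $1$: the standard toric/log-smooth computation gives $a\ge 1+\operatorname{ord}(t)\ge 1$ from the coordinate cutting out $\{t=0\}$, with the boundary coefficient-$1$ component contributing only a zero shift. I expect this lower bound to be the main technical point, but it is standard via the log discrepancy formula for lc centers. Divisors with center exactly $(0,0)$ have center inside $E\times\{0\}$ on the resolution, so the mld is $1$.

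For (2), first check $mK_{X_m}$ is Cartier. The quotient map $\pi$ is étale in codimension one, since the only nonfree points are $\{0\}\times\mathbb{A}^1\cup Y\times\{0\}$ up to codimension two (the fixed locus is $\{(0,0)\}$ for $T$-powers combined with $t\mapsto\xi t$), so $K_{Y\times\mathbb{A}^1}=\pi^*K_{X_m}$. Take a generator $\omega\wedge dt$ of $\omega_{Y\times\mathbb{A}^1}$, where $\omega$ is a generator of $\omega_Y$. The generator acts on $\omega$ by some scalar $\lambda$, which I would try to show is $1$: $\omega$ corresponds to the invariant differential on $C$, which is translation-invariant, combined with the normalized $\alpha$. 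It acts on $dt$ by $\xi$. So the character of $G$ on the generator of $\omega_{Y\times\mathbb{A}^1}$ has order exactly $m$. By the standard criterion, $rK_{X_m}$ is Cartier at $0$ iff some invariant generator of $\omega^{[r]}$ exists, i.e. iff the character to the $r$-th power is trivial (the stabilizer of $(0,0)$ is all of $G$). Hence the index is exactly $m$. The delicate point here is verifying $\lambda=1$. If $\lambda$ were a nontrivial $m$-th root, I would instead argue directly that $\lambda\xi$ still has order $m$, or adjust $\xi$ accordingly.
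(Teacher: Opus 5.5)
Your proof is correct in outline, and for (2) it takes a genuinely different route from the paper. It is complete only once the step you flag, $\lambda=1$, is actually proved. Your fallback for that step does not work.

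\textbf{Part (1).} You do essentially what the paper does: freeness on $\widetilde Y\times\mathbb{A}^1$, then Remark~\ref{rmk:vf} and Proposition~\ref{prop:eq}. The one difference is that you compute $\operatorname{mld}_{(0,0)}(Y\times\mathbb{A}^1)$ directly on the log smooth model $(\widetilde Y\times\mathbb{A}^1,E\times\mathbb{A}^1)$, rather than as $\operatorname{mld}_0(Y)+\operatorname{mld}_0(\mathbb{A}^1)$. That is fine, but fix the inequality. Take local coordinates $s,t,u$ at a point of the center, with $E\times\mathbb{A}^1=(s=0)$. The correct bound is $a_F\ge\operatorname{ord}_F(t)+\operatorname{ord}_F(u)\ge 1$. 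Your bound $a_F\ge 1+\operatorname{ord}_F(t)$ is false already for the blow-up of $E\times\{0\}$, where $a_F=\operatorname{ord}_F(t)=1$.

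\textbf{Part (2): how the routes differ.} The paper never looks at $\omega_Y$ at the vertex. It passes to the smooth quotient $W_m$ and uses $r^*K_{X_m}=K_{W_m}+F'$. It then restricts to the lc place $F'$ by adjunction. The index becomes the order of $K_{F'}$ in $\operatorname{Pic}(F')\cong\operatorname{Pic}(E/G)$, which is computed on $C\times\mathbb{A}^1$, where $\omega_C\wedge dt$ is explicit.

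You instead use the classical criterion for quotients \'etale in codimension one. (Here the non-free locus is exactly $\{(0,0)\}$; $Y\times\{0\}$ is not part of it.) At a point fixed by all of $G$, the index equals the order of the character of $G$ on a semi-invariant local generator of $\omega_{Y\times\mathbb{A}^1}$; to see this, evaluate the unit at the fixed point. This is shorter and gives ``divides $m$'' and ``exactly $m$'' at once. The cost is that all the work moves to computing that character at the singular vertex.

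\textbf{Part (2): the step you left open.} That character is the entire content of (2). If $\lambda$ were $\xi^{-1}$, the index would be $1$. So ``$\lambda\xi$ still has order $m$'' cannot be argued in general, and ``adjusting $\xi$'' changes the example. The idea you name does prove $\lambda=1$, as follows.

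Let $\pi\colon Y\setminus\{0\}=\underline{\operatorname{Spec}}_C\bigoplus_{n\in\mathbb{Z}}L^{\otimes n}\to C$ be the projection. Set $\omega_0:=\pi^*\omega_C\wedge ds/s$, with $s$ a local fibre coordinate of degree $1$.
\begin{enumerate}
\item Replacing $s$ by $\mu s$, with $\mu$ a unit pulled back from $C$, changes $ds/s$ by $d\mu/\mu$, which is killed by $\pi^*\omega_C$. So $\omega_0$ is a global nowhere-vanishing $2$-form.
\item Since $\mathcal{O}(Y\setminus\{0\})=\bigoplus_{n\ge 0}H^0(C,L^{\otimes n})=R$, the form $\omega_0$ generates $\omega_Y$.
\item $T$ is graded and covers $t_P$, so $T^*s$ is again such a fibre coordinate. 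Hence $T^*\omega_0=\pi^*t_P^*\omega_C\wedge ds/s=\omega_0$, for any choice of $\alpha$.
\end{enumerate}
Equivalently, the Poincar\'e residue of the pullback of $\omega_0$ to $\widetilde Y$ along $E$ is a nonzero multiple of $\omega_C$, and the residue is $G$-equivariant. This is the paper's adjunction step in local form. With this inserted, your argument is complete.
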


\begin{proof}
We define 
\[
\widetilde{Y} := \underline{\operatorname{Spec}}_C \bigoplus_{n \ge 0} L^{\otimes n}.
\]
Let $\pi \colon \widetilde{Y} \to C$ be the structural morphism. 
The morphism $\pi$ is an $\mathbb{A}^1$-bundle, and hence $\widetilde{Y}$ is smooth.
There naturally exists a proper birational morphism $p \colon \widetilde{Y} \to Y$. 
The morphism $p$ is an isomorphism outside the vertex $0$, and the exceptional divisor $E := p^{-1}(0) \subset \widetilde{Y}$ is isomorphic to $C$. 
In fact, the restriction $\pi|_E \colon E \to C$ gives an isomorphism.
\[
\xymatrix{
  E \ar@{}[r]|-{\subset} \ar[dr]_{\pi|_E}^{\hspace{-2mm}\simeq} & \widetilde{Y} \ar[d]^{\pi} \ar[r]^{p} & Y \\
  & C \ar@{|->}[r] & 0 \ar@{}[u]|-{\rotatebox{90}{$\in$}}
}
\]
We also have 
\[
p^* K_{Y} = K_{\widetilde{Y}} + E, 
\]
and therefore $\operatorname{mld}_0 (Y) = 0$ (cf.\ \cite{Kol13}*{(3.8) and Proposition 3.14(4)}). 

The translation $t_P$ and $\alpha$ also induce an automorphism $\widetilde{T}$ on $\widetilde{Y}$. 
Via the isomorphism $\pi|_E \colon E \xrightarrow{\sim} C$, the restriction of $\widetilde{T}$ to $E$ naturally coincides with $t_P$. 
We define the actions of $G = \mathbb{Z}/m\mathbb{Z}$ on $C$ and $\widetilde{Y}$ via $t_P$ and $\widetilde{T}$, respectively. 
By construction, the morphisms $p \colon \widetilde{Y} \to Y$ and $\pi \colon \widetilde{Y} \to C$ are $G$-equivariant. 
Since $P$ has order $m$, the group $G$ acts freely on $C$. 
Because $\pi$ is $G$-equivariant, it immediately follows that the action of $G$ on $\widetilde{Y}$ is also free. 
On the other hand, for $m \ge 2$, the vertex $0 \in Y$ is the unique fixed point of the action of $G$ on $Y$ (while for $m=1$, the group $G$ is trivial). 
In either case, the $G$-action on $Y$ is virtually free by Remark \ref{rmk:vf}. 

It follows from \cite{NS4}*{Theorem 3.9} that the diagonal action of $G$ on the product $Y \times \mathbb{A}^1$ is also virtually free. 
Therefore, by applying Proposition \ref{prop:eq}, we obtain
\[
\operatorname{mld}_0(X_m) = \operatorname{mld}_{(0,0)} \bigl( Y \times \mathbb{A}^1 \bigr) = \operatorname{mld}_0(Y) + \operatorname{mld}_0(\mathbb{A}^1) = 0 + 1 = 1.
\]
This completes the proof of assertion (1).

To simplify the notation, we define
\[
\overline{X}_m := Y \times \mathbb{A}^1 \quad \text{and} \quad \overline{W}_m := \widetilde{Y} \times \mathbb{A}^1.
\]
The group $G$ acts diagonally on these products, where the action on the second factor $\mathbb{A}^1$ is given by $t \mapsto \xi t$. 
We define the quotient variety 
\[
W_m := \overline{W}_m / G = \bigl( \widetilde{Y} \times \mathbb{A}^1 \bigr) / G.
\]
Let $F := E \times \mathbb{A}^1 \subset \overline{W}_m$, and let $F' \subset W_m$ be its image under the quotient morphism $h \colon \overline{W}_m \to W_m$. 
These spaces and morphisms fit into the following commutative diagram:
\[
\xymatrix@C=3em{
  F := E \times \mathbb{A}^1 \ar@{}[r]|-{\subset} \ar[d] & \overline{W}_m := \widetilde{Y} \times \mathbb{A}^1 \ar[d]_{h} \ar[r]^-{q \,=\, p \times \operatorname{id}} & \overline{X}_m := Y \times \mathbb{A}^1 \ar[d]^{g} \\
  F' \ar@{}[r]|-{\subset} & W_m := \bigl( \widetilde{Y} \times \mathbb{A}^1 \bigr) / G \ar[r]^-{r} & X_m := (Y \times \mathbb{A}^1) / G
}
\]

Recall that the $G$-action on $\widetilde{Y}$ is free. 
Thus, the diagonal action on $\overline{W}_m = \widetilde{Y} \times \mathbb{A}^1$ is also free. 
This implies that $W_m$ is smooth, and the quotient morphism $h$ is \'{e}tale. 
In particular, we have $K_{\overline{W}_m} = h^* K_{W_m}$. 
Since $F$ is a smooth divisor invariant under the free $G$-action, its image $F'$ is also a smooth divisor on $W_m$, and $h^* F' = F$. 

Since $Y$ is Gorenstein, so is $\overline{X}_m$. 
Because $g \colon \overline{X}_m \to X_m$ is the quotient morphism by the finite group $G$ of order $m$, it follows that $m K_{X_m}$ is a Cartier divisor. 
This shows that the Cartier index of $K_{X_m}$ at $0$ divides $m$. 

To show that the index is exactly $m$, let $c$ denote the Cartier index of $K_{X_m}$ at $0$. 
Let $r \colon W_m \to X_m$ and $q \colon \overline{W}_m \to \overline{X}_m$ be the morphisms in the commutative diagram above. 
Since the $G$-action on $\overline{X}_m$ is free in codimension one, we have the equality $g^* K_{X_m} = K_{\overline{X}_m}$ of $\mathbb{Q}$-divisors. 
Since $q^* K_{\overline{X}_m} = K_{\overline{W}_m} + F$, we obtain
\[
h^* (r^* K_{X_m}) = q^* (g^* K_{X_m}) = q^* K_{\overline{X}_m} = K_{\overline{W}_m} + F = h^* (K_{W_m} + F'). 
\]
Since $h$ is a surjective finite \'{e}tale morphism, this implies the equality $r^* K_{X_m} = K_{W_m} + F'$ of $\mathbb{Q}$-divisors. 
Multiplying by $c$, we obtain the equality of Cartier divisors
\[
r^*(c K_{X_m}) = c(K_{W_m} + F'). 
\]

Let $F'_0 \subset F'$ be the image of $E \times \{0\}$ under the quotient morphism $F \to F'$. 
Note that $F'_0$ is naturally isomorphic to the quotient curve $E/G$. 
Since $r(F'_0) = \{0\}$, we have
\[
\mathcal{O}_{W_m}\bigl( c(K_{W_m} + F') \bigr)|_{F'_0} = \mathcal{O}_{W_m} \bigl( r^*(c K_{X_m}) \bigr)|_{F'_0} \cong \mathcal{O}_{F'_0}. 
\]
Since $(K_{W_m} + F')|_{F'} \sim K_{F'}$ by the adjunction formula, we have
\[
\mathcal{O}_{F'}(c K_{F'})|_{F'_0} \cong \mathcal{O}_{F'_0}. 
\]

Recall that the $G$-action on $F = E \times \mathbb{A}^1$ is diagonal, acting freely on $E$ and linearly on $\mathbb{A}^1$. 
Thus, the quotient $F' = ( E \times \mathbb{A}^1 ) /G$ is an $\mathbb{A}^1$-bundle over the quotient curve $E/G$, and $F'_0$ is embedded as its section. 
Since $F'$ is an $\mathbb{A}^1$-bundle over the smooth curve $E/G$, the pullback $\operatorname{Pic}(E/G) \to \operatorname{Pic}(F')$ is an isomorphism (cf.\ \cite{Ful}*{Theorem 3.3(a)}). 
Its inverse is given by the restriction to the section $F'_0$, so the restriction map $\operatorname{Pic}(F') \to \operatorname{Pic}(F'_0)$ is also an isomorphism. 
This yields 
\[
\mathcal{O}_{F'}(c K_{F'}) \cong \mathcal{O}_{F'}.
\]

\begin{claim}\label{claim}
The line bundle $\mathcal{O}_{F'}(K_{F'})$ is a torsion element of order exactly $m$ in $\operatorname{Pic}(F')$. 
\end{claim}

\begin{proof}[Proof of Claim]
By construction, $F'$ is the quotient of $F = E \times \mathbb{A}^1 \cong C \times \mathbb{A}^1$ by the free action of $G$. 
Since $C$ is an elliptic curve, its canonical bundle is trivial and generated by a nowhere-vanishing global regular differential form $\omega_C$. 
Similarly, the canonical bundle of $\mathbb{A}^1 = \operatorname{Spec} k[t]$ is generated by $dt$. 
Thus, the canonical bundle of $F$ is trivial, generated by the section $\omega_F := \omega_C \wedge dt$. 

Recall that the translation $t_P$ on $C$ preserves the invariant differential $\omega_C$ (cf.\ \cite{Sil86}*{Chapter III, Proposition 5.1}). 
Since the action of the generator $T \in G$ on $\mathbb{A}^1$ is given by $t \mapsto \xi t$, we have $T^* dt = \xi d t$. 
Therefore, $T$ acts on the section $\omega_F$ by 
\[
T^* \omega_F = T^* \omega_C \wedge T^* dt = \omega_C \wedge (\xi dt) = \xi \omega_F. 
\]
This means that the $G$-linearization on $\mathcal{O}_F(K_F) \cong \mathcal{O}_F \cdot \omega_F$ is given by multiplication by $\xi$. 
Since $\xi$ is a primitive $m$-th root of unity, the $m$-th tensor power $\omega_F^{\otimes m}$ is invariant under $G$, while no smaller positive power is invariant. 
Consequently, the descent of $\mathcal{O}_F(K_F)$ to $F'$ is a line bundle whose $m$-th tensor power is trivial, but no smaller positive power is trivial. 
This means that $\mathcal{O}_{F'}(K_{F'})$ is a torsion element of order exactly $m$ in $\operatorname{Pic}(F')$.
\end{proof}

By the claim and the isomorphism $\mathcal{O}_{F'}(c K_{F'}) \cong \mathcal{O}_{F'}$, $c$ must be a multiple of $m$. 
Since we have already shown that $c$ divides $m$, we conclude that $c = m$. 
This completes the proof of Theorem \ref{thm:counterexample}.
\end{proof}

\end{document}